\documentclass[preprint,12pt,3p]{elsarticle}




\usepackage{amssymb}
\usepackage{amsthm}

\usepackage{amsmath}
\usepackage{mathtools}
\usepackage{amsfonts}
\usepackage{mathrsfs}
\usepackage{verbatim}
\usepackage{etoolbox}
\usepackage[none]{hyphenat}






\newtheorem{thm}{Theorem}
\newtheorem{cor}[thm]{Corollary}     
\newtheorem{prop}[thm]{Proposition}
\newtheorem{conj}[thm]{Conjecture}
\newtheorem{rmk}[thm]{Remark}


\newcommand{\N}{\mathbb{N}}

\journal{arXiv}

\begin{document}

\begin{frontmatter}

\title{A note on the converse of Wolstenholme's Theorem\tnoteref{label1}}
\tnotetext[label1]{This research did not receive any specific grant from funding agencies in the public, commercial, or not-for-profit sectors.}

\author{Saud Hussein}
\address{Institute of Mathematics, Academia Sinica, 6F, Astronomy-Mathematics Building, No.1, Sec.4, Roosevelt Road, Taipei 10617, Taiwan}

\ead{saudhussein@gate.sinica.edu.tw}

\begin{abstract}
Given a prime $p$ and a positive integer $m$ satisfying a certain inequality, the converse of Wolstenholme's Theorem is shown to hold for the product $mp^k$ where $k$ is any positive integer, generalizing a result by Helou and Terjanian.
\end{abstract}

\begin{keyword}
Wolstenholme's Theorem
\end{keyword}

\end{frontmatter}


\section{Jones' Conjecture}

For $n \in \N$, let $w_n = \binom{2n-1}{n-1} = \frac{1}{2}\binom{2n}{n}$. In 1862, Wolstenholme \cite{Wolstenholme} proved the following:

\begin{thm} [Wolstenholme's Theorem] If $p \geq 5$ is prime, then \[w_p \equiv 1 \pmod{p^3}.\]
\end{thm}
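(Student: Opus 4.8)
The plan is to turn the congruence for the binomial coefficient into congruences for power sums of inverses modulo $p$. First I would write $w_p$ as a product over the upper half of the factorial,
\[
w_p = \binom{2p-1}{p-1} = \prod_{j=1}^{p-1}\frac{p+j}{j} = \prod_{j=1}^{p-1}\left(1+\frac{p}{j}\right),
\]
where each $1/j$ denotes the inverse of $j$ modulo $p^3$ (legitimate since $1 \le j \le p-1$). Expanding the product and discarding every term that already carries a factor $p^3$ (these are $p$-integral because all indices are coprime to $p$), I obtain
\[
w_p \equiv 1 + p\,S_1 + p^2\,S_{1,1} \pmod{p^3}, \qquad S_1 = \sum_{j=1}^{p-1}\frac{1}{j}, \quad S_{1,1}=\sum_{1\le i<j\le p-1}\frac{1}{ij}.
\]
It therefore suffices to prove $S_1 \equiv 0 \pmod{p^2}$ and $S_{1,1}\equiv 0 \pmod p$.

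For $S_1$, I would pair the term $j$ with $p-j$, using $\tfrac1j + \tfrac1{p-j} = \tfrac{p}{j(p-j)}$, to get $S_1 = p\sum_{j=1}^{(p-1)/2} \tfrac{1}{j(p-j)}$. Reducing the remaining sum modulo $p$ replaces $j(p-j)$ by $-j^2$, so $S_1 \equiv -p\sum_{j=1}^{(p-1)/2} j^{-2} \pmod{p^2}$. The crux is then the power-sum congruence $S_2 := \sum_{j=1}^{p-1} j^{-2} \equiv 0 \pmod p$: since $j \mapsto j^{-1}$ permutes the nonzero residues, $S_2 \equiv \sum_{k=1}^{p-1} k^2 = \tfrac{(p-1)p(2p-1)}{6}\equiv 0 \pmod p$, where the hypothesis $p\ge 5$ ensures $6$ is invertible and the factor $p$ survives. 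Because $(p-j)^{-2}\equiv j^{-2}$, the full sum is twice the half-sum, so the half-sum also vanishes, yielding $S_1 \equiv 0 \pmod{p^2}$.

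Finally, for $S_{1,1}$ I would invoke the Newton-type identity $S_1^2 = S_2 + 2S_{1,1}$, so that $2S_{1,1} = S_1^2 - S_2 \equiv 0 \pmod p$ because both $S_1$ and $S_2$ vanish modulo $p$; as $p$ is odd this forces $S_{1,1}\equiv 0 \pmod p$. Substituting back, the terms $pS_1$ and $p^2 S_{1,1}$ are each divisible by $p^3$, and $w_p \equiv 1 \pmod{p^3}$ follows. The main obstacle is the power-sum congruence $S_2 \equiv 0 \pmod p$, which is precisely where $p\ge 5$ enters; everything else is bookkeeping in the two expansions.
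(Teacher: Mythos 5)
Your proof is correct, and there is nothing in the paper to compare it against: the paper states Wolstenholme's Theorem as a known result with a citation to Wolstenholme's 1862 paper and never proves it, since its own contribution (Proposition~3 and Corollary~4) concerns the converse direction. On its own merits your argument is complete and is the classical one. The identity $w_p=\binom{2p-1}{p-1}=\prod_{j=1}^{p-1}(1+p/j)$ is exact, and reading it modulo $p^3$ (with $1/j$ the inverse of $j$ modulo $p^3$, which exists since $1\le j\le p-1$) gives $w_p\equiv 1+pS_1+p^2S_{1,1}\pmod{p^3}$, the discarded terms all carrying $p^3$ against $p$-integral factors. The pairing $1/j+1/(p-j)=p/(j(p-j))$ correctly reduces $S_1\equiv 0\pmod{p^2}$ to $\sum_{j=1}^{(p-1)/2}j^{-2}\equiv 0\pmod p$, and your use of the permutation $j\mapsto j^{-1}$ of the nonzero residues to replace $\sum_{j=1}^{p-1}j^{-2}$ by $\sum_{k=1}^{p-1}k^2=(p-1)p(2p-1)/6$ is exactly where the hypothesis $p\ge 5$ enters: the numerator contains a single factor of $p$ and the denominator $6$ is coprime to $p$, whereas for $p=3$ the sum is $1^{-2}+2^{-2}\equiv 2\pmod 3$ and the argument genuinely fails, as it must. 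The symmetry $(p-j)^{-2}\equiv j^{-2}$ together with the invertibility of $2$ passes the conclusion from the full sum to the half sum, and the identity $S_1^2=S_2+2S_{1,1}$ disposes of the $p^2$-term since $S_1\equiv S_2\equiv 0\pmod p$ and $p$ is odd. Every step is justified; this is the standard harmonic-sum proof of Wolstenholme's Theorem and serves as a fine self-contained replacement for the paper's citation.
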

\noindent
James P. Jones conjectures no other solutions exist.

\begin{conj} [Jones' Conjecture]
\[w_p \equiv 1 \pmod{p^3} \iff p \geq 5 \text{ is prime}.\]
\end{conj}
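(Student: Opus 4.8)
The plan is to prove the biconditional one implication at a time. The implication ``$p \ge 5$ prime $\Rightarrow w_p \equiv 1 \pmod{p^3}$'' is exactly Wolstenholme's Theorem and is already in hand, so the entire burden falls on the converse: assuming $w_n \equiv 1 \pmod{n^3}$ for an integer $n \ge 2$ (the value $n=1$ being excluded by convention, since there $w_1 = 1$ holds vacuously modulo $1$), I must force $n$ to be a prime $\ge 5$. I would argue by elimination, successively discarding $n \in \{2,3,4\}$, then every even $n$, then every multiple of $3$, and finally every remaining composite $n$, so that the only survivors are the primes $\ge 5$.

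The small cases $n \in \{2,3,4\}$ fall to direct computation. To remove all even $n$ simultaneously I would invoke Kummer's theorem in the sharp form $v_2\!\left(\binom{2n}{n}\right) = s_2(n)$, the number of $1$'s in the binary expansion of $n$; consequently $w_n = \tfrac12\binom{2n}{n}$ is odd precisely when $n$ is a power of $2$, and in every other even case $w_n \equiv 0 \pmod 2$ already defeats $w_n \equiv 1 \pmod 8$. For $n = 2^j$ with $j \ge 1$ a short induction gives $w_{2^j} \equiv 3 \pmod 8$, again incompatible with the hypothesis. The multiples of $3$ are more delicate, since $\binom{2n}{n}$ need not be divisible by $3$; here I would deploy a Granville-type evaluation of $\binom{2n}{n} \pmod{27}$ in terms of the base-$3$ digits of $n$ to show $w_n \not\equiv 1 \pmod{27}$ whenever $3 \mid n$.

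For the surviving $n$ every prime factor is $\ge 5$, and the engine becomes the Jacobsthal--Wolstenholme congruence $\binom{ap}{bp} \equiv \binom{a}{b} \pmod{p^3}$, valid for $p \ge 5$. Iterating it along a prime power $p^k \,\|\, n$ and writing $n = p^k m$ with $p \nmid m$ yields $\binom{2n}{n} \equiv \binom{2m}{m} \pmod{p^3}$, hence $w_n \equiv w_m \pmod{p^3}$ after cancelling the unit $2$. The hypothesis $w_n \equiv 1 \pmod{n^3}$ then delivers, for each prime $p \ge 5$ dividing $n$, the external congruence $w_{m_p} \equiv 1 \pmod{p^3}$ with $m_p = n/p^{v_p(n)}$. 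If $n$ is a pure prime power the reduction collapses ($m_p = 1$ and $w_1 = 1$ automatically) and carries no information; if $n$ has a second prime factor the congruences $w_{m_p} \equiv 1 \pmod{p^3}$ are genuine constraints that must hold simultaneously across all $p \mid n$.

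The hard part, and the reason the conjecture is still open, is exactly these last two situations. Pure prime powers $n = p^k$ with $k \ge 2$ are invisible to the mod-$p^3$ reduction and demand a finer $p$-adic expansion of $\binom{2p^k}{p^k}$ to order $p^{3k}$ --- equivalently, control of the higher Wolstenholme quotients $(w_{p^k}-1)/p^3$ uniformly in $p$ and $k$ --- for which no sufficient lower bound on the deviation is presently known. For composite $n$ with several prime factors one must instead rule out the simultaneous external congruences $w_{m_p} \equiv 1 \pmod{p^3}$ for every $p \mid n$; the method of Helou and Terjanian disposes of a single such congruence under a size inequality between $m$ and $p$, and removing that inequality so as to cover all pairs at once is the crux I expect to be the principal, and as yet unbridged, obstacle.
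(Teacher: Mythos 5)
The statement you set out to prove is labelled a \emph{conjecture} in the paper, and it is genuinely open: the paper contains no proof of the biconditional and does not claim one. Its actual content is Wolstenholme's Theorem for the forward direction plus partial converse results --- Proposition \ref{prop1} shows $w_n \not\equiv 1 \pmod{n}$ (a fortiori modulo $n^3$) for $n = mp^b$ whenever some integer $a \geq 0$ satisfies $p^a < m < p^{a+1} < 2m$, proved not by Jacobsthal-type congruences but by Kummer's Theorem \ref{kum}: the inequality forces the base-$p$ expansion of $2m$ to be strictly longer than that of $m$, hence at least one carry in computing $m + m$, hence $p \mid w_n$. Note that this route yields a failure already modulo $p$, which is stronger than the modulo-$p^3$ information your Jacobsthal reduction extracts. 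So there is no paper proof to match yours against; the honest comparison is between your programme and the paper's partial results, which correspond roughly to the ``single external congruence under a size inequality'' portion of your plan.

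As a proof of the conjecture, your proposal has genuine gaps, two of which you candidly flag yourself and one you do not. First, for pure prime powers $n = p^k$ with $k \geq 2$, your reduction collapses to $w_{p^k} \equiv 1 \pmod{p^3}$, which is \emph{consistent} with the hypothesis $w_{p^k} \equiv 1 \pmod{p^{3k}}$ rather than contradicting it; no uniform lower bound on $v_p(w_{p^k} - 1)$ is known, and the paper cites only computational verification for prime powers up to $10^9$. Second, for $n$ with two or more prime factors $\geq 5$, you must refute the simultaneous congruences $w_{n/p^{v_p(n)}} \equiv 1 \pmod{p^3}$ for every $p \mid n$; the paper (following Helou and Terjanian) handles one prime at a time and only under the inequality $p^a < m < p^{a+1} < 2m$, and removing that restriction is exactly the open crux, as you say. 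Third, your elimination of all multiples of $3$ via a ``Granville-type evaluation'' of $\binom{2n}{n} \pmod{27}$ is asserted but never carried out, and it is not among the settled cases the paper lists (even integers, prime powers $\leq 10^9$, computations for integers $\leq 10^9$); as written it is an unsupported step, not a routine one. Since the first two items constitute precisely the open content of Jones' Conjecture, what you have produced is an accurate map of the difficulty --- largely consistent with what is known --- but not a proof, and no complete proof could have been expected here.
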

\noindent
Jones' conjecture is true for even integers, powers of primes $\leq$ $10^9$ (\cite{Trevisan}, \cite{Helou}), and based on computations for integers $\leq$ $10^9$. See the expository paper \cite{Hussein} for further background.

Recall the $p$-adic valuation of an integer is the exponent of the highest power of the prime $p$ that divides the integer.

\begin{thm} [Kummer's Theorem for binomial coefficients \cite{Kummer}] \label{kum}
Given integers $n\geq m\geq 0$ and a prime $p$, the $p$-adic valuation of $\binom{n}{m}$ is equal to the number of carries when $m$ is added to $n - m$ in base $p$.
\end{thm}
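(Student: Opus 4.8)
The plan is to reduce the statement to Legendre's formula for the $p$-adic valuation of a factorial and then reinterpret the resulting digit-sum expression combinatorially as a count of carries. Writing $v_p$ for the $p$-adic valuation and $s_p(N)$ for the sum of the base-$p$ digits of a nonnegative integer $N$, I would first establish Legendre's formula
\[
v_p(N!) \;=\; \sum_{i \geq 1} \left\lfloor \frac{N}{p^i} \right\rfloor \;=\; \frac{N - s_p(N)}{p-1}.
\]
The first equality comes from counting, for each $i$, how many of $1,\dots,N$ are divisible by $p^i$; the second follows by inserting the base-$p$ expansion $N = \sum_j a_j p^j$ and summing the geometric series $\sum_{i=1}^{j} p^{\,j-i} = (p^j-1)/(p-1)$.

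Applying additivity of $v_p$ to $\binom{n}{m} = n!\big/\big(m!\,(n-m)!\big)$ then gives
\[
v_p\binom{n}{m} \;=\; v_p(n!) - v_p(m!) - v_p\big((n-m)!\big) \;=\; \frac{s_p(m) + s_p(n-m) - s_p(n)}{p-1},
\]
where the contributions of $n$ cancel because $n = m + (n-m)$. This reduces the theorem to the purely combinatorial identity
\[
s_p(m) + s_p(n-m) - s_p(n) \;=\; (p-1)\cdot\bigl(\text{number of carries adding } m \text{ and } n-m \text{ in base } p\bigr).
\]

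The heart of the argument, and the step I expect to need the most care, is this identity, which I would prove by tracking the base-$p$ addition column by column. Let $a=m$ and $b=n-m$ have digits $a_j,b_j$, let $c_j\in\{0,1\}$ be the carry into position $j$ with $c_0=0$, and let $d_j$ be the digits of $n=a+b$, so that $a_j + b_j + c_j = d_j + p\,c_{j+1}$ at every position $j$. Summing over all $j$ yields $s_p(a) + s_p(b) + \sum_{j\geq 0} c_j = s_p(n) + p\sum_{j\geq 0} c_{j+1}$, and since $c_0 = 0$ a reindexing gives $\sum_{j\geq 0} c_j = \sum_{j\geq 0} c_{j+1} =: C$, the total number of carries. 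The relation therefore collapses to $s_p(m) + s_p(n-m) - s_p(n) = (p-1)C$, i.e.\ each carry lowers the combined digit sum by exactly $p-1$. Substituting this into the displayed expression for $v_p\binom{n}{m}$ cancels the factor $p-1$ and leaves precisely $C$, so $v_p\binom{n}{m}$ equals the number of carries. The only delicate point is keeping the carry-in and carry-out indices straight in that reindexing step; everything else is routine bookkeeping.
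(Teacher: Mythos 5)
Your proof is correct, but there is nothing in the paper to compare it against: Theorem \ref{kum} is stated as a known result with a citation to Kummer's original work, and the paper never proves it---Proposition \ref{prop1} simply uses it as a black box. Your argument is the standard modern derivation: Legendre's formula $v_p(N!) = \sum_{i\geq 1}\lfloor N/p^i\rfloor = (N - s_p(N))/(p-1)$, additivity of $v_p$ applied to $\binom{n}{m} = n!/\bigl(m!\,(n-m)!\bigr)$, and the column-by-column bookkeeping identity $s_p(m) + s_p(n-m) - s_p(n) = (p-1)C$, where $C$ is the number of carries. All three steps check out, and the reindexing you flag as the delicate point is handled correctly: since $c_0 = 0$, the carry-in sum $\sum_{j\geq 0} c_j$ and the carry-out sum $\sum_{j\geq 0} c_{j+1}$ are both equal to $C$, which collapses the summed column relations to $(p-1)C$ as claimed. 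The only detail left implicit is that each carry satisfies $c_j \in \{0,1\}$; this needs a one-line induction (if $c_j \leq 1$ then $a_j + b_j + c_j \leq 2(p-1)+1 < 2p$, so $c_{j+1} \leq 1$), since you quietly use it nowhere but it justifies calling $\sum_j c_{j+1}$ the \emph{number} of carries rather than a weighted count. With that remark added, the proof is complete and self-contained, which is arguably a service the paper itself does not provide.
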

\noindent
Notice the $p$-adic valuation of $w_n$ and $2w_n$ are equal when $p \not= 2$. The integers $2w_n = \binom{2n}{n}$ are known as the \textit{central binomial coefficients}.

\begin{prop} \label{prop1}
For any odd prime $p$ and $m \in \N$ such that \[p^a < m < p^{a+1} < 2m\] for some integer $a\geq 0$, the product $n=mp^b$ satisfies $w_n \not\equiv 1 \pmod{n}$ for any $b \in \N$ and therefore Jones' Conjecture holds for $n$.
\end{prop}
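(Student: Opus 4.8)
The plan is to prove the stronger statement that $p$ itself divides $w_n$; this immediately yields $w_n \not\equiv 1 \pmod{n}$, since $b \geq 1$ gives $p \mid n$, so $w_n \equiv 1 \pmod{n}$ would force $w_n \equiv 1 \pmod{p}$, contradicting $p \mid w_n$. To then conclude that Jones' Conjecture holds for $n$, I would note that $n = mp^b$ is composite: it has $p$ as a proper divisor because $m > p^a \geq 1$ forces $m \geq 2$. Thus both sides of Jones' biconditional are false for $n$ (the left side \emph{a fortiori}, since $w_n \not\equiv 1 \pmod{n}$ gives $w_n \not\equiv 1 \pmod{n^3}$), so the biconditional holds for $n$.

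To establish $p \mid w_n$ I would pass to the central binomial coefficient. Writing $v_p(\cdot)$ for the $p$-adic valuation, the fact noted above that $v_p(w_n) = v_p\binom{2n}{n}$ for odd $p$ lets me apply Kummer's Theorem (Theorem \ref{kum}): $v_p\binom{2n}{n}$ equals the number of carries when $n$ is added to itself in base $p$. The role of the factorization $n = mp^b$ is that the base-$p$ digits of $n$ are precisely those of $m$ followed by $b$ trailing zeros; those zeros produce no carries and do not interact with the higher digits, so the carry count for $n + n$ equals that for $m + m$, giving $v_p\binom{2n}{n} = v_p\binom{2m}{m}$. It therefore suffices to exhibit a single carry when $m$ is doubled in base $p$.

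This is where the hypothesis $p^a < m < p^{a+1} < 2m$ does its work. The inequality $m < p^{a+1}$ places the leading base-$p$ digit of $m$ in position $a$. If the addition $m + m$ had no carries at all, then every digit $d_i$ of $m$ would satisfy $2d_i \leq p - 1$, so $2m$ would keep all its digits in positions $0$ through $a$ and hence satisfy $2m \leq p^{a+1} - 1 < p^{a+1}$; but the hypothesis asserts $p^{a+1} < 2m$. This contradiction shows at least one carry occurs, so $v_p\binom{2m}{m} \geq 1$ and thus $p \mid w_n$, as required.

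The only step demanding genuine care is the carry analysis: making precise that the absence of carries in the base-$p$ addition $m + m$ forces $2m < p^{a+1}$, and that appending the factor $p^b$ leaves the number of carries unchanged. The remaining ingredients — the reduction modulo $p$, the identity $v_p(w_n) = v_p\binom{2n}{n}$, and the compositeness of $n$ — are routine. It is worth emphasizing that the inequality $p^{a+1} < 2m$ is the load-bearing hypothesis: it is exactly the condition guaranteeing that doubling $m$ overflows into a new base-$p$ digit, which is what produces the carry.
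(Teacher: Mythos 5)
Your proposal is correct and follows essentially the same route as the paper: reduce to $p \mid w_m$ via Kummer's Theorem and the shift-invariance of base-$p$ digits under multiplication by $p^b$, then use the hypothesis $p^{a+1} < 2m$ to force a carry in the addition $m + m$. Your contrapositive phrasing (no carries would give $2m \leq p^{a+1}-1$) is just a repackaging of the paper's direct comparison of the digit lengths of $m$ and $2m$, so the two arguments are the same in substance.
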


\begin{proof}
Let $m = n_0 + n_1 p + n_2 p^2 + \cdots + n_k p^k$ be the $p$-adic expansion of $m$. This means for each coefficient $n_i$, $0 \leq n_i \leq p-1$ with $n_k \not= 0$. Assume there exists an integer $a\geq 0$ such that \begin{align} p^a < m < p^{a+1} < 2m. \label{ineq} \end{align}
Then \[p^a <  n_0 + n_1 p + n_2 p^2 + \cdots + n_k p^k < p^{a+1},\] so $k < a+1$. Also by the definition of a $p$-adic expansion, $k \geq a$. To see this explicitly, the formula for the sum of a geometric series gives us \begin{align*} n_0 + n_1 p + n_2 p^2 + \cdots + n_k p^k &\leq (p-1)(1+p+p^2+ \cdots + p^k)\\
&= (p-1)\left(\frac{p^{k+1}-1}{p-1}\right)\\
&= p^{k+1} - 1.
\end{align*}
Therefore \[p^a < p^{k+1} - 1 \implies a \leq k < a+1 \implies k=a.\]
Now let \[m_0 + m_1 p + m_2 p^2 + \cdots + m_l p^l\] be the $p$-adic expansion of $2m$. By the same argument as before, \[m_0 + m_1 p + m_2 p^2 + \cdots + m_l p^l \leq p^{l+1} - 1,\] so by \eqref{ineq}, \[p^{a+1} < p^{l+1} - 1 \implies l \geq a + 1.\]
Therefore $l > k$, meaning there is at least one carry when adding $m$ and $2m - m = m$ in base $p$. Since $w_m = \frac{1}{2}\binom{2m}{m}$ and $p \not= 2$, theorem \ref{kum} implies $p\mid w_m$. Finally, the coefficients in the $p$-adic expansion of $m$ and $n=mp^b$, $b \in \N$, are the same, so $p\mid w_n$ and so $w_n \not\equiv 1 \pmod{p}$. Therefore $w_n \not\equiv 1 \pmod{n}$ and the proof is complete.
\end{proof}

\begin{cor}[Proposition 5, part 4 \cite{Helou}] \label{coroll}
For any odd prime $p$ and $m \in \N$ such that \[m < p < 2m,\] the product $n=mp^k$ satisfies $w_n \not\equiv 1 \pmod{n}$ for any $k \in \N$ and therefore Jones' conjecture holds for $n$.
\end{cor}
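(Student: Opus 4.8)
The plan is to recognize this corollary as the special case $a = 0$ of Proposition \ref{prop1}. The hypothesis of the proposition reads $p^a < m < p^{a+1} < 2m$; setting $a = 0$ turns this into $1 < m < p < 2m$, which is almost exactly the hypothesis $m < p < 2m$ stated in the corollary. The single gap to close is the extra condition $1 < m$. I would dispatch this by noting that $p$ is an \emph{odd} prime, so $p \geq 3$, and the two inequalities $m < p$ and $p < 2m$ together give $p/2 < m < p$; since $p \geq 3$, this forces $m \geq 2$, and hence $1 < m$ holds automatically. Thus the full hypothesis $p^0 < m < p^{1} < 2m$ of Proposition \ref{prop1} is satisfied with $a = 0$.

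Having made this reduction, I would simply invoke Proposition \ref{prop1} with the exponent $b = k$ to conclude that $w_n \not\equiv 1 \pmod{n}$ for $n = mp^k$ and every $k \in \N$, and therefore that Jones' Conjecture holds for $n$. The only substantive step is the observation that the stated inequalities force $m \geq 2$; everything else is an immediate specialization of the already-proved proposition. Consequently I expect no genuine obstacle here beyond carefully recording the reduction to the $a = 0$ case and verifying that $m > 1$ is automatic rather than an additional assumption that must be imposed.
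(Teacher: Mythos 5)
Your proposal is correct and matches the paper's intent exactly: the corollary is stated as the $a=0$ specialization of Proposition \ref{prop1}, with the implicit point being that $m<p<2m$ and $p\geq 3$ force $m\geq 2$, so the condition $p^0<m$ holds automatically. Your explicit verification of that small point is a welcome addition, but the route is the same.
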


\begin{rmk} \label{rmk1}
If $p$ and $q$ are consecutive primes with $ 2 \not= q < p$, then Chebyshev's Theorem \cite{Chebychev} (also known as Bertrand's postulate) implies \[q < p < 2q.\]
Therefore by corollary \ref{coroll}, Jones' conjecture holds for the product of two consecutive primes. 
\end{rmk}

\begin{rmk}
For powers of a prime, proposition \ref{prop1} clearly does not apply. In this case since $n = p^k$, adding $n$ and $n$ in base $p$ gives us $2p^k$ and so there are no carries when $p \not= 2$. Therefore by theorem \ref{kum}, $p\not \vert \,\, w_n$.
\end{rmk}


\bibliographystyle{elsarticle-harv}

\bibliography{biblio.bib}

\begin{thebibliography}{6}
\expandafter\ifx\csname natexlab\endcsname\relax\def\natexlab#1{#1}\fi
\expandafter\ifx\csname url\endcsname\relax
  \def\url#1{\texttt{#1}}\fi
\expandafter\ifx\csname urlprefix\endcsname\relax\def\urlprefix{URL }\fi

\bibitem[{Chebyshev(1852)}]{Chebychev}
Chebyshev, P.~L., 1852. {Memoire sur les nombres premiers} ({French}). J. Math.
  Pures Appl. (2) 17~(1), 366--390.

\bibitem[{Helou and Terjanian(2008)}]{Helou}
Helou, C., Terjanian, G., 2008. On wolstenholme's theorem and its converse. J.
  Number Theory 128~(3), 475--499.

\bibitem[{{Hussein}(2018)}]{Hussein}
{Hussein}, S., Feb. 2018. {New conjecture related to a conjecture of McIntosh}.
  ArXiv e-prints.

\bibitem[{Kummer(1852)}]{Kummer}
Kummer, E., 1852. {Uber die Erganzungssatze zu den allgemeinen
  Reciprocitatsgesetzen} ({German}). J. Reine Angew. Math. 44, 93--146.

\bibitem[{Trevisan and Weber(2001)}]{Trevisan}
Trevisan, V., Weber, K., 2001. {Testing the converse of Wolstenholme's
  Theorem}. Mat. Contemp. 21, 275--286.

\bibitem[{Wolstenholme(1862)}]{Wolstenholme}
Wolstenholme, J., 1862. {On certain properties of prime numbers}. Q. J. Math.
  5, 35--39.

\end{thebibliography}

\end{document}